\newtheorem{theorem}{Theorem}[section]
\newtheorem{corollary}[theorem]{Corollary}
\newtheorem{proposition}[theorem]{Proposition}
\theoremstyle{definition}
\newtheorem{definition}[theorem]{Definition}
\numberwithin{equation}{section}
\newcommand{\tr}{\textnormal{Tr}}
\newcommand{\cyclo}[1][]{\mathbb{Q}(\zeta_{#1})}
\newcommand{\bn}{\mathbb{N}}
\newcommand{\bz}{\mathbb{Z}}
\newcommand{\bq}{\mathbb{Q}}
\newcommand{\br}{\mathbb{R}}
\newcommand{\fo}{\mathcal{O}}
\newcommand{\fok}{\fo_{K}}
\newcommand{\trkq}{\tr_{K/\bq}}
\newcommand{\mr}[1][]{\mathbb{Q}(\zeta_{#1}+\zeta_{#1}^{-1})}
\newcommand{\mi}[1][]{\mathbb{Q}(\zeta_{#1}-\zeta_{#1}^{-1})}
\title[Upper bounds for the Euclidean minima of abelian fields]{Upper bounds for the Euclidean minima of abelian fields}
\author{Eva Bayer-Fluckiger, Piotr Maciak}
\address{Ecole Polytechnique F\'{e}d\'{e}rale de Lausanne\\EPFL--FSB--MATHGEOM--CSAG \\
1015 Lausanne\\
Switzerland}
\email{eva.bayer@epfl.ch}
\email{piotr.maciak@epfl.ch}
\date{\today}
\begin{document}

\begin{abstract}
The aim of this paper is to survey and extend recent results concerning bounds
for the Euclidean minima of algebraic  number fields. In particular, we give upper bounds for the Euclidean minima of abelian fields of prime power conductor.
\end{abstract}
\maketitle

\section{Introduction}
Let $K$ be an algebraic number field, and let $\fok$ be its ring of integers. We denote by  ${\rm N}: K \to \bq$
the absolute value of the norm map. The number field $K$ is said to be {\it Euclidean} (with respect to the norm) if for every $a,b \in \fok$ with $b \not = 0$ there exist
$c, d \in \fok$ such that $a = bc + d$ and ${\rm N}(d) < {\rm N}(b)$.
It is easy to check that $K$ is Euclidean if and only if for every $x \in K$ there exists $c \in \fok$ such that ${\rm N}(x-c) < 1$. This suggests to look at
$$
M(K) = {\rm sup}_{x \in K} {\rm inf}_{c \in \fok} {\rm N}(x-c),
$$
called the {\it Euclidean minimum} of $K$.

The study of Euclidean number fields and Euclidean minima is a classical one. However, little is known about the precise value of $M(K)$ (see
for instance [Le 95]  for a survey, and the tables of Cerri [C 07] for some numerical results). Hence, it is natural to look for {\it upper bounds} for $M(K)$.
This is also a classical topic, for which a survey can be found in [Le 95].

Let $n$ be the degree of $K$ and $D_K$ the absolute value of its discriminant.
It is shown in [BF 06]  that for any number field $K$, we have $M(K) \le 2^{-n}D_K$.
The case of {\it totally real} fields is especially interesting, and has been the
subject matter of several papers. In particular, a conjecture attributed to Minkowski
states that if $K$ is totally real, then $M(K) \le 2^{-n} \sqrt {D_K}.$ This conjecture
is proved for $n \le 8$, cf. [Mc 05], [HGRS 09], [HGRS 11].

Several recent results concern the case of  {\it abelian fields}. In [BFM 13], upper bounds are given for abelian fields of conductor $p^r$, where
$p$ is an odd prime. The present paper complements these results by handling the case of abelian fields of conductor a power of $2$.
In particular, we have

\medskip
\noindent
{\bf Theorem.} {\it If $K$ is totally real of conductor $p^r$, where $p$ is a prime and $r \ge 2$, then $$M(K) \le 2^{-n} \sqrt {D_K}.$$ }

In other words, Minkowski's conjecture holds for such fields.

\medskip

These results are based on the study of lattices associated to number fields (see [BF 99], [BF 02]).  In \S 2, we recall some results on lattices and number fields,
and in \S 3 we survey the results of [BFM 13] concerning abelian fields of conductor an odd prime power. The case of abelian fields of power of $2$ conductor is the
subject matter of \S 4.  Finally, we survey some results concerning cyclotomic fields and their maximal totally real subfields in \S 5.

\section{Lattices and number fields}

We start by recalling some standard notions concerning Euclidean lattices (see for instance
[CS 99] and [M 96].  A {\it lattice} is a pair $(L,q)$, where $L$ is a free $\bz$--module of finite rank, and  $q : L_{\br} \times L_{\br} \to \br$
is a positive definite symmetric bilinear form, where $L_{\br} = L \otimes_\bz \br$.
If $(L,q)$ is a lattice and $a \in \br$, then we denote by $a(L,q)$ the lattice
$(L,aq)$. Two lattices $(L,q)$ and $(L',q')$ are said to be
{\it similar} if and only if there exists $a \in \br$ such that $(L',q')$ and $a(L,q)$ are isomorphic,
in other words if  there exists an isomorphism of $\bz$-modules $f : L \to L'$ such that $q'(f(x),f(y)) = a q(x,y)$.

\medskip

Let $(L,q)$ be a lattice, and set $q(x) = q(x,x)$. The {\it maximum }of $(L,q)$ is defined by
\[
 {\rm max}(L,q) = \sup_{x \in L_{\br}} \inf_{c \in L} q(x-c).
\]
Note that ${\rm max}(L,q)$ is the square of the covering radius of the associated sphere covering.
The {\it determinant} of $(L,q)$ is denoted by ${\rm det}(L,q)$. It is by definition the determinant of the matrix of $q$ in a $\bz$--basis of $L$.
The {\it Hermite--like thickness} of $(L,q)$ is
$$
\tau(L,q) = {{\rm max}(L,q) \over {\rm det}(L,q)^{1/m}},
$$
where $m$ is the rank of $L$. Note that $\tau(L,q)$ only depends on the similarity class of the lattice $(L,q)$.

Next we introduce a family of lattices that naturally occur in connection with abelian fields (see \S 2),
and for which one has good upper bounds of the Hermite-like thickness. This family is defined as follows :

Let $m\in {\mathbb {N}}$, and $b\in {\br}$ with $b>m$.
Let $L=L_{b,m}$ be a lattice in ${\br} ^m$ with Gram matrix
$$b I_m - J_m = \left( \begin{array}{cccc}
b-1 & -1 & \ldots  & -1 \\
-1 & \ddots & \ddots & \vdots  \\
\vdots & \ddots & \ddots & -1  \\
-1 & \ldots & -1 & b-1
\end{array} \right),$$
where $I_m$ is the $m\times m$-identity matrix and $J_m$ is the all-ones matrix of size $m \times m$.
These lattices were defined in [BFN 05],  (4.1).
Note that the lattice $L_{m+1,m} $ is similar to the dual lattice $A_m^{\#}$ of the
root lattice $A_m$ (see for instance [CS 99], Chapter 4, \S 6, or [M 96]  for the definition of the root lattice $A_m$).

Let $K$ be an number field of degree $n$, and suppose that $K$ is either totally
real or totally complex. Let us denote by
$^{\overline {\ }} : K  \to K$ the identity in the first case and the
complex conjugation in the second one, and let $P$ be the set of totally
positive elements of the fixed field of this involution.  Let us denote by ${\rm Tr} : K \to \bq$ the trace
map. For any $\alpha \in P$, set $q_{\alpha}(x,y) = {\rm Tr}(\alpha x \overline y)$ for all $x, y \in K$.
Then $(\fok,q_{\alpha})$ is a lattice.  Set
$$
\tau_{\rm min}(\fok) = {\rm inf} \{ \tau(\fok,q_{\alpha}) \ | \ \alpha \in P \}.
$$

If $D_K$ is the absolute value of the discriminant of $K$, then, by [BF 06], Corollary (5.2), we have
\begin{equation}\label{E:gen-est}
 M(K) \leq \left(\frac{\tau_{\min}(\fok)}{n}\right)^{\frac{n}{2}} \sqrt{D_K},
\end{equation}
This  is used in [BF 06], [BFN 05], [BFS 06] and [BFM 13] to give upper bounds of Euclidean minima (see also \S 3 and \S 5). The bounds
of \S 4 are also based on this result.

\section{Abelian fields of odd prime power conductor}
The set of all abelian extensions of $\bq$ of odd prime power conductor will be denoted by $\mathcal{A}$.
For $K \in \mathcal{A}$ we denote by
$n$  the degree of  $K / \bq$,  by
$D$ the absolute value of the discriminant of  $K$, by $p$
 the unique prime dividing the conductor of  $K$, and by $r$ the $p$-adic additive valuation of the conductor of $K$.
If the dependence on the field $K$ needs to be emphazized, we shall add the index $K$ to the above symbols. For example, we shall write $n_K$ instead of $n$.

\begin{definition}
Let  $\mathcal{D} \subset \mathcal{A}$, and let $\psi : \mathcal{D} \to \mathbb{R}$ be a function. We shall say that
$\psi_o \in \mathbb{R}$ is the limit of $\psi$ as $n_K$
goes to infinity and write
\[
 \lim_{n_K \to \infty} \psi(K)  = \psi_0
\]
if for every $\epsilon > 0$ there exists $N >0$ such that for every field $K \in \mathcal{D}$
\[
 n_K > N  \implies  |\psi(K) - \psi_0| < \epsilon.
\]
We shall also write
\[
 \lim_{p_K \to \infty} \psi(K)  = \psi_0
\]
if for every $\epsilon > 0$ there exists $N >0$ such that for every field $K \in \mathcal{D}$
\[
p_K > N \implies |\psi(K) - \psi_0| < \epsilon.
\]
\end{definition}

The following is proved in [BFM 13], th. (3.1) and (3.2) :

\begin{theorem}\label{T:bound1}
Let $K \in \mathcal{A}$. Then there exist constants $\varepsilon=\varepsilon(K) \leq 2$ and $C=C(K) \leq \frac{1}{3}$ such that
\[
  M(K) \leq  C^n \, (\sqrt{D_K})^{\varepsilon}.
\]
If $[K :  \bq] > 2$, then one may choose $\varepsilon(K) < 2$. Moreover,
\[
 \lim_{n_K \to \infty} \varepsilon(K) = 1.
\]
If $r_K \geq 2$, or $r_K=1$ and $[\cyclo:K]$ is constant, then we also have
\[
 \lim_{p_K \to  \infty} C(K) = \frac{1}{2\sqrt{3}}.
\]
\end{theorem}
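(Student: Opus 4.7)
The plan is to apply inequality~(2.1) and exhibit, for each $K \in \mathcal{A}$, a totally positive element $\alpha \in P$ such that the lattice $(\fok, q_\alpha)$ is similar to one of the lattices $L_{b,m}$ defined at the end of Section 2. This reduces the problem to quoting the explicit upper bound on $\tau(L_{b,m})$ obtained in [BFN 05] and then carefully tracking constants.

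The construction proceeds as follows. By Kronecker--Weber, $K \subseteq \cyclo[p^r]$; let $H \subseteq (\bz/p^r\bz)^*$ be the subgroup fixing $K$, so that the Gaussian periods $\eta_H(a) = \sum_{h \in H} \zeta_{p^r}^{ah}$, with $a$ ranging over coset representatives of $H$, span a $\bz$-sublattice of $\fok$ of controlled index. One chooses $\alpha \in P$ built from cyclotomic units (for instance a suitable factor of $(1-\zeta_{p^r})(1-\zeta_{p^r}^{-1})$) so that the trace pairings $\tr_{K/\bq}\bigl(\alpha\, \eta_H(a)\, \overline{\eta_H(b)}\bigr)$ take exactly two distinct values depending on whether $aH = bH$. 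The resulting Gram matrix then has the shape $\lambda(bI_m - J_m)$ for explicit $\lambda$ and $b$, identifying $(\fok, q_\alpha)$ up to similarity with $L_{b,m}$ for $m = n$ (or $m = n-1$, if the trivial coset must be treated separately).

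With the similarity in place, one substitutes the bound on $\tau(L_{b,m})$ into (2.1). The factor $(\tau(L_{b,m})/n)^{n/2}$ yields the $C^n$ term, while the rescaling $\lambda$ needed to compare $(\fok, q_\alpha)$ to the standard form of $L_{b,m}$ contributes a power of $D_K$, producing the exponent $\varepsilon$ in $M(K) \le C^n(\sqrt{D_K})^\varepsilon$. Direct estimates give $\varepsilon \le 2$ and $C \le 1/3$ throughout, with $\varepsilon < 2$ as soon as $m \ge 2$, i.e.\ whenever $[K:\bq] > 2$. For the asymptotics, the ratio $\tau(L_{b,m})/m$ tends to $1/12$ as $m\to\infty$, which accounts for the limiting value $1/\sqrt{12} = 1/(2\sqrt{3})$; and separate analysis of the $\lambda$-contribution shows $\varepsilon \to 1$ as $n_K \to \infty$.

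The most delicate step will be the exact computation of the Gram matrix of $q_\alpha$ in the Gaussian period basis. It hinges on explicit identities for sums $\tr_{K/\bq}(\alpha \zeta_{p^r}^c)$ and uses that, for abelian fields of odd prime power conductor, traces of roots of unity take only a few values given by M\"obius-type formulas. The hypotheses on $r_K$ and $[\cyclo:K]$ entering the asymptotic for $C$ reflect the fact that, in the regime $r=1$ with $[\cyclo[p]:K]$ unbounded, the combinatorial structure of the cosets of $H$ varies with $p$ in a way that may prevent $b/m$ from tending to the value required for the limit $1/(2\sqrt 3)$; ruling out this regime is what the extra assumption is designed to do.
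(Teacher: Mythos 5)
Your overall strategy---apply (2.1) to a trace lattice on $\fok$, identify that lattice with one of the lattices $L_{b,m}$, and quote the bound of [BFN 05] on $\tau(L_{b,m})$---agrees with the paper's in its first and last steps, but the middle step, which is the heart of the matter, is wrong as stated. You claim that for a suitable $\alpha$ the Gram matrix of $q_\alpha$ in the Gaussian-period basis takes exactly two values according to whether $aH=bH$, hence has the shape $\lambda(bI_m-J_m)$ with $m=n$ (or $n-1$). That is true only when $r=1$. Already for $K=\bq(\zeta_9)$ (so $p=3$, $r=2$, $e=1$, $n=6$, and the periods are just the powers of $\zeta_9$) the trace form gives $\trkq(\zeta_9^{a-b})\in\{6,-3,0\}$: three distinct values, and no choice of $\lambda$ and $b$ makes this matrix equal to $\lambda(bI-J)$, since the off-diagonal entries are not all equal. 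What is actually true ([BFM 13], Theorem (6.1); Theorem~\ref{T:ortho-sum} above, with $\alpha=1$) is that $(\fok,q)$ is an \emph{orthogonal sum} of $\frac{p^{r-1}-1}{e}$ copies of $p^{r-1}A_{p-1}^{\#}$ and one copy of $ep^{r-1}L_{p/e,\,(p-1)/e}$; the single $L_{b,m}$-block that occurs has rank $(p-1)/e$, which for $r\ge 2$ is much smaller than $n$. Each summand $A_{p-1}^{\#}$ is itself similar to an $L_{b,m}$, so the thickness bound of [BFN 05] can still be applied blockwise---this is how Corollary~\ref{c:tau-min} and then the theorem are obtained---but that requires the orthogonal decomposition, which your two-valued Gram matrix claim cannot deliver.

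This gap is not cosmetic: the cases $r\ge 2$ are precisely the ones needed for Corollary~\ref{T:Minkowski}, and your constant-tracking ($\varepsilon<2$ as soon as $m\ge 2$, $\tau(L_{b,m})/m\to 1/12$, the role of the hypothesis on $r_K$ and $[\cyclo:K]$) is calibrated to a single block of rank $n$. With the correct block structure, the exponent $\upsilon=rn-\frac{p^{r-1}-1}{e}-1$ and the factor $p^{\,r-\upsilon/n}$ of Corollary~\ref{c:tau-min} enter, and the limits $\varepsilon(K)\to 1$ and $C(K)\to\frac{1}{2\sqrt 3}$ must be recomputed from that expression. A further, smaller point: the paper works with $\alpha=1$ throughout; the cyclotomic-unit twist $(1-\zeta)(1-\zeta^{-1})$ you propose is the device used in [BFN 05] for the maximal totally real subfields, and it is neither needed here nor obviously available inside an arbitrary $K\in\mathcal{A}$.
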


\bigskip

\begin{theorem}\label{T:bound2}
Let $K \in \mathcal{A}$. Then there is a constant $\omega=\omega(K)$ such that
\[
 M(K) \leq \omega^n \sqrt{D_K}.
\]
If $r_K \geq 2$, or $r_K=1$ and $[\cyclo:K]$ is constant, then
\[
 \lim_{p_K \to \infty} \omega(K) = \frac{1}{2\sqrt{3}}.
\]
Moreover, if $r_K \geq 2$, then $\omega(K) \leq 3^{-2/3}$.
\end{theorem}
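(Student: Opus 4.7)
The plan is to deduce the statement from the general estimate (\ref{E:gen-est}),
\[
 M(K) \leq \left( \frac{\tau_{\min}(\fok)}{n_K} \right)^{n_K/2} \sqrt{D_K},
\]
by setting $\omega(K) := \sqrt{\tau_{\min}(\fok)/n_K}$. With this choice both the inequality $M(K) \leq \omega(K)^{n_K}\sqrt{D_K}$ and the mere existence of the constant are immediate, so all the content of the theorem reduces to obtaining good upper bounds on $\tau_{\min}(\fok)$, that is, to exhibiting a suitable totally positive element $\alpha \in P$.

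For $K \in \mathcal{A}$ of conductor $p^r$, I would follow the strategy used for Theorem \ref{T:bound1}. Using the character decomposition of $\gal{K/\bq}$ together with an $\alpha$ built from Gauss periods as in \S 3, one arranges for $(\fok, q_\alpha)$ to be similar (after a tensor decomposition) to a product of the explicit lattices $L_{b,m}$ introduced in \S 2, with parameters $(b,m)$ determined by $p$, $r$, and the characters cutting out $K$. The Hermite-like thickness of $L_{b,m}$ is elementary to control: its determinant is $b^{m-1}(b-m)$ from the spectrum of $bI_m - J_m$, and the covering radius admits a closed-form description (matching that of $A_m^{\#}$ in the extremal case $b=m+1$). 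Combining these produces an explicit upper bound for $\tau_{\min}(\fok)/n_K$, and hence for $\omega(K)$.

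For the limit claim, I would next specialize this bound to the regime $r_K \geq 2$, or $r_K = 1$ with $[\cyclo:K]$ fixed: in either case $n_K \to \infty$ as $p_K \to \infty$, and the dominant lattice factors correspond to characters of high conductor, for which $\tau(L_{b,m})/m$ tends to $1/12$. This should yield $\omega(K) \to 1/(2\sqrt{3})$, in parallel with the analogous limit for $C(K)$ in Theorem \ref{T:bound1}. The absolute bound $\omega(K) \leq 3^{-2/3}$ when $r_K \geq 2$ should then follow from a uniform monotonicity (or convexity) estimate on $\tau(L_{b,m})$ that remains valid across all admissible pairs $(b,m)$, exploiting the fact that higher ramification forces $b/m$ to be large.

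The hard part will be the precise asymptotic control of $\tau(L_{b,m})$, particularly of its covering radius, uniformly in the parameters. The sharp constants $1/(2\sqrt{3})$ and $3^{-2/3}$ are recovered only if this control is tight; a related technical nuisance is verifying that the contributions from characters of small conductor do not spoil the limit, which requires a careful accounting of how the tensor factors combine in the character decomposition of $(\fok, q_\alpha)$.
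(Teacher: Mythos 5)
Your reduction is the one the paper uses: take $\omega(K)=\sqrt{\tau_{\min}(\fok)/n}$, so that (\ref{E:gen-est}) gives $M(K)\le\omega(K)^n\sqrt{D_K}$ for free, and all the content lies in bounding $\tau(\fok,q_\alpha)$ for a well-chosen form. But the structural input is not what you describe. The paper takes the \emph{plain} trace form $q(x,y)=\trkq(x\overline y)$ (i.e.\ $\alpha=1$), and Theorem~\ref{T:ortho-sum} states that $(\fok,q)$ is an \emph{orthogonal sum} of $(p^{r-1}-1)/e$ copies of $p^{r-1}A_{p-1}^{\#}$ together with one copy of $ep^{r-1}L_{p/e,d}$, where $d=(p-1)/e$; Gauss periods furnish the integral basis exhibiting this splitting, they are not used to build an auxiliary $\alpha$. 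The difference between an orthogonal sum and the ``tensor decomposition'' you posit is not cosmetic: the passage to Corollary~\ref{c:tau-min} uses that $\max$ is additive and $\det$ multiplicative over orthogonal summands, which fails for tensor products; moreover the covering radius of $L_{b,m}$ for general $b$ is only bounded from above (via [BFN 05], (4.1)), not given in closed form as you assert.

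The second gap is the constant $3^{-2/3}$. ``Uniform monotonicity exploiting that higher ramification forces $b/m$ to be large'' is not an argument; the bound comes from taking the explicit right-hand side of Corollary~\ref{c:tau-min}, namely $n\cdot p^{r-\upsilon/n}\cdot\frac{p^{r+1}+p^r+1-e^2}{12p^{r+1}}$ with $\upsilon=rn-(p^{r-1}-1)/e-1$, dividing by $n$, taking the square root, and maximizing over all admissible $(p,r,e)$ with $r\ge2$; the worst cases occur at small $p$, which is where the value $3^{-2/3}$ originates, and this is a finite but genuine verification that your sketch does not supply. Your limit analysis, on the other hand, is sound: in the stated regimes $p^{r-\upsilon/n}\to1$ and the remaining factor tends to $1/12$ as $p\to\infty$, whence $\omega(K)\to\sqrt{1/12}=1/(2\sqrt3)$, exactly as in the paper (which defers the details to [BFM 13], \S 7).
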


\bigskip

Note that this implies that Minkowski's conjecture holds for all totally real fields
 $K \in \mathcal{A}$ with composite conductor :

 \begin{corollary}\label{T:Minkowski}

 Let $K \in \mathcal{A}$, and suppose that the conductor of $K$ is of the form $p^r$ with
 $r  > 1$. Then
\[
 M(K) \leq 2^{-n} \sqrt{D_K}.
\]

\end{corollary}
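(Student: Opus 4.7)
The plan is to deduce Corollary \ref{T:Minkowski} directly from Theorem \ref{T:bound2}, together with one trivial numerical inequality. Since $K \in \mathcal{A}$ has conductor $p^r$ with $r > 1$, the hypothesis $r_K \geq 2$ of the second half of Theorem \ref{T:bound2} is satisfied. That theorem then provides a constant $\omega(K) \leq 3^{-2/3}$ for which
\[
M(K) \leq \omega(K)^n \sqrt{D_K} \leq 3^{-2n/3}\sqrt{D_K}.
\]

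The only remaining point is the comparison $3^{-2/3} \leq 2^{-1}$. Cubing and inverting converts this into the obvious inequality $9 \geq 8$. Raising to the $n$-th power then yields $3^{-2n/3} \leq 2^{-n}$, and combining this with the display above produces the advertised bound $M(K) \leq 2^{-n}\sqrt{D_K}$.

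There is essentially no obstacle here: once Theorem \ref{T:bound2} has established the uniform bound $\omega(K) \leq 3^{-2/3}$ in the ramified case $r_K \geq 2$, the corollary amounts to repackaging this estimate into the cleaner form predicted by Minkowski's conjecture. All of the genuine work -- producing a suitable $\alpha \in P$ so that the scaled trace lattice $(\fok,q_\alpha)$ has Hermite-like thickness controlled by that of a member of the family $L_{b,m}$, and feeding the resulting value of $\tau_{\min}(\fok)$ into the general estimate (\ref{E:gen-est}) -- is already packaged inside Theorem \ref{T:bound2} (proved in [BFM 13]). The corollary simply records that the constant $3^{-2/3}$ happens to be just small enough to cross the Minkowski threshold $2^{-1}$.
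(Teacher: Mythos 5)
Your proposal is correct and follows exactly the paper's own route: invoke the $r_K \geq 2$ case of Theorem~\ref{T:bound2} to get $\omega(K) \leq 3^{-2/3}$, and then observe that $3^{-2/3} < 1/2$ (equivalently $9 > 8$). The verification of the numerical inequality by cubing is the only detail you spell out that the paper leaves implicit.
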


This follows from Theorem~(\ref{T:bound2}), since $3^{-2/3} < 1/2$, and for $K$ totally real this is precisely
Minkowski's conjecture.

The proofs of these results are based on the method of [B 05], outlined in the previous section. For $K \in \mathcal{A}$, we denote by
$\fok$ is the ring of integers of $K$, and we consider the
lattice $(\fok,q)$, where $q$ is defined by $q(x,,y) = \trkq(x \overline y)$. As we have
seen in \S 4, the Hermite--like thickness of this lattice can be used to give an upper bound of
the Euclidean minimum of $K$.

Let  $\zeta$ be a primitive root of unity
of order  $p^r$, let us denote by  $e$ the degree
$[\cyclo : K]$. Let $\Gamma_K$  be the orthogonal sum of
$\frac{p^{r-1}-1}{e}$ copies of the lattice $p^{r-1}A_{p-1}^{\#}$.
Set  $d=\frac{p-1}{e}$, and let  $\Lambda_K = e p^{r-1} L_{\frac{p}{e},d}$ (note that
the scaling is taken in the sense of the previous section, that is it refers to multiplying the quadratic form by
the scaling factor). We have (see [BFM 13],  Theorem (6.1)) :

\begin{theorem}\label{T:ortho-sum}
The lattice $(\fok,q)$ is isometric to the orthogonal sum of $\Gamma_K$ and of $\Lambda_K$.
\end{theorem}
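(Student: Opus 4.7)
The plan is to construct an explicit $\bz$-basis of $\fok$ from Gauss periods at each level $k = 1,\ldots,r$ of the cyclotomic tower, and then verify that the Gram matrix of $q(x,y) = \tr_{K/\bq}(x\bar y)$ in this basis is block-diagonal with precisely the claimed blocks. Set $\zeta = \zeta_{p^r}$ and $H = \gal{\bq(\zeta)/K}$, so $|H|=e$. The assumption that $K$ has conductor exactly $p^r$ forces $H \cap U_{r-1} = \{1\}$, where $U_k = \ker(G \to (\bz/p^k\bz)^*)$; because $G$ is cyclic for $p$ odd, this in fact forces $\gcd(e,p)=1$, and hence $H$ embeds into $(\bz/p^k\bz)^*$ for every $k \ge 1$. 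For each such $k$ and each $H$-orbit $\omega \subset (\bz/p^k\bz)^*$ (there are $p^{k-1}d$ of them, $d=(p-1)/e$), I define
\[
\eta^{(k)}_{\omega} \;=\; \sum_{h \in H} \zeta^{p^{r-k} h a_\omega}\;\in\;\fok
\]
for any chosen representative $a_\omega \in \omega$.

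To exhibit a $\bz$-basis of $\fok$ I retain all $d$ Gauss periods at level~$1$ and, for each $k \ge 2$, drop one Gauss period per fiber of the natural reduction map from level-$k$ to level-$(k-1)$ orbits. The identity $\sum_{i=0}^{p-1}\zeta_p^i = 0$ yields the single relation $\sum_{i=0}^{p-1}\eta^{(k)}_{\omega_i} = 0$ on a fiber $\{\omega_0,\ldots,\omega_{p-1}\}$, so the $d + \sum_{k=2}^r(p-1)p^{k-2}d = p^{r-1}d = n$ retained periods span $\fok$; an explicit transition-matrix determinant check (worked out, for instance, in the test case $K = \bq(\zeta_9)$) confirms that they form a genuine $\bz$-basis rather than a finite-index sublattice.

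The core computation then uses the classical identity $\tr_{\bq(\zeta)/\bq}(\zeta^c) = p^{r-j}\mu(p^j)$ whenever $\zeta^c$ has order $p^j \ge p$, which vanishes as soon as $j \ge 2$. Expanding
\[
\tr_{K/\bq}\!\bigl(\eta^{(k)}_\omega\,\overline{\eta^{(k')}_{\omega'}}\bigr) \;=\; \frac{1}{e}\sum_{h,h' \in H}\tr_{\bq(\zeta)/\bq}\!\bigl(\zeta^{p^{r-k}ha_\omega - p^{r-k'}h'a_{\omega'}}\bigr)
\]
and reading off the $p$-adic valuation of the exponent shows that the form is block-diagonal: distinct levels are mutually orthogonal, and within a fixed level $k \ge 2$, distinct fibers are orthogonal as well. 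The surviving entries are controlled by two simple orbit-counts $N_0 = \#\{(h,h') : ha_\omega \equiv h'a_{\omega'} \pmod{p^k}\}$ and $N_0 + N_1 = \#\{(h,h') : ha_\omega \equiv h'a_{\omega'} \pmod{p^{k-1}}\}$, each taking only the values $0$ or $e$ since $H \hookrightarrow (\bz/p^k\bz)^*$. A short calculation then yields Gram matrix $e p^{r-1}\bigl[(p/e)I_d - J_d\bigr] = \Lambda_K$ at level~$1$, and Gram matrix $p^{r-1}(pI_{p-1} - J_{p-1}) = p^{r-1}L_{p,p-1}$ within each of the $\sum_{k=2}^r p^{k-2}d = (p^{r-1}-1)/e$ fibers; the similarity $L_{p,p-1}\sim A_{p-1}^{\#}$ recalled in \S2 identifies each fiber block with a summand of $\Gamma_K$.

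I expect the main obstacle to be combinatorial bookkeeping: first, confirming rigorously that dropping one Gauss period per fiber produces a bona fide $\bz$-basis of $\fok$ (not just a $\bq$-basis of $K$, or a finite-index sublattice), and second, tracking the scaling through $L_{p,p-1}\sim A_{p-1}^{\#}$ carefully enough that each level-$k$ block is identified with $p^{r-1}A_{p-1}^{\#}$ in the precise isometry type stipulated by the theorem.
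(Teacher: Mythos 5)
The paper itself does not prove this theorem: it is quoted from [BFM 13], Theorem (6.1), whose argument is in the same spirit as yours (an explicit orthogonal decomposition of $(\fok,q)$ built from traces of roots of unity). Your computational core checks out. The reduction to $\gcd(e,p)=1$ from the conductor hypothesis is right; the trace formula $\tr_{\cyclo/\bq}(\zeta^c)=\frac{\varphi(p^r)}{\varphi(p^j)}\mu(p^j)$ does make distinct levels, and distinct fibers within a level, orthogonal; the counts $N_0$ and $N_0+N_1$ take only the values $0$ and $e$ because $H$ injects into $(\bz/p^k\bz)^*$ for every $k$; and the surviving blocks are $ep^{r-1}\bigl[(p/e)I_d-J_d\bigr]$ at level $1$ and $p^{r-1}(pI_{p-1}-J_{p-1})$ on each truncated fiber, with $\frac{p^{r-1}-1}{e}$ fibers in total, exactly matching $\Lambda_K$ and $\Gamma_K$. (Your worry about the scaling of $A_{p-1}^{\#}$ is legitimate but is only a matter of convention: [BFM 13] uses the integral normalization in which $A_{p-1}^{\#}$ has Gram matrix $pI_{p-1}-J_{p-1}$; with the determinant-$1/p$ normalization the summands of $\Gamma_K$ would have to read $p^{r}A_{p-1}^{\#}$.)

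The one genuine gap is the one you flag yourself: you assert that the $n$ retained Gauss periods form a $\bz$-basis of $\fok$, but you verify this only in the single example $\bq(\zeta_9)$. The fiber relations $\sum_i\eta^{(k)}_{\omega_i}=0$ show that the retained periods generate the same $\bz$-module as the full set of periods, but nothing you write shows that this module is all of $\fok$ rather than a finite-index sublattice; this is a real issue, since every period is a trace from $\bz[\zeta]$ and surjectivity of $\tr_{\cyclo/K}$ on rings of integers is not automatic when $\cyclo/K$ is ramified at $p$. Fortunately you already possess everything needed to close this in general, with no transition matrix at all: the periods span a full-rank sublattice $L'\subseteq\fok$ because your Gram matrix is block-diagonal with positive definite blocks, and that same computation gives
\[
\det(L',q)=\det(\Lambda_K)\cdot\det(\Gamma_K)=p^{\,rd-1}\cdot p^{\,(r(p-1)-1)\frac{p^{r-1}-1}{e}}=p^{\,rn-\frac{p^{r-1}-1}{e}-1},
\]
which by the conductor--discriminant formula is exactly $D_K=\det(\fok,q)$. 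Since $\det(L',q)=[\fok:L']^2\det(\fok,q)$, this forces $[\fok:L']=1$, i.e. $L'=\fok$. With that insertion the argument is complete.
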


This leads to  the following
upper bound
of $\tau_{\rm min}(\fok)$ :

\begin{corollary}\label{c:tau-min}
We have
\[
 \tau_{\rm min}(\fok) \leq  \tau(\fok,q)   \leq n \cdot p^{r-\frac{\upsilon}{n}} \cdot \frac{p^{r+1}+p^r+1-e^2}{12 p^{r+1}},
\]
where
\[
 \upsilon = rn -\frac{(p^{r-1}-1)}{e} -1.
\]
\end{corollary}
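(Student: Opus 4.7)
The first inequality is immediate from the definition of $\tau_{\min}$: the element $\alpha=1$ lies in $P$ (it is totally positive in the fixed field), so the form $q = q_1$ appears in the infimum and one obtains $\tau_{\min}(\fok)\le \tau(\fok,q)$.

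For the second inequality, the plan is to exploit the orthogonal splitting provided by Theorem~\ref{T:ortho-sum} and then compute the Hermite-like thickness $\tau(\fok,q) = \max(\fok,q)/\det(\fok,q)^{1/n}$ summand by summand. Recall that under orthogonal direct sum the maximum is additive and the determinant is multiplicative, and that under the scaling convention of \S 2 one has $\max(a(L,q)) = a\cdot\max(L,q)$ and $\det(a(L,q)) = a^{\rank L}\det(L,q)$.

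I would first compute $\max(\Gamma_K)$: since $\Gamma_K$ is the orthogonal sum of $k := (p^{r-1}-1)/e$ copies of $p^{r-1}A_{p-1}^{\#}$, this equals $k\cdot p^{r-1}\cdot\max(A_{p-1}^{\#})$, and the covering radius (squared) of $A_{p-1}^{\#}$ is the classical value $\tfrac{(p-1)(p+1)}{12p}$. In parallel, compute $\max(\Lambda_K)=ep^{r-1}\max(L_{p/e,d})$, using the explicit formula for the covering radius of $L_{b,m}$ established in [BFN 05] with $b=p/e$ and $m=d=(p-1)/e$. Summing the two contributions and carefully collecting terms (using $n = (p-1)p^{r-1}/e$) yields the closed form
\[
\max(\fok,q) \;\le\; \frac{n\bigl(p^{r+1}+p^{r}+1-e^{2}\bigr)}{12\,p}.
\]

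In parallel I would compute $\det(\fok,q) = \det(\Gamma_K)\cdot\det(\Lambda_K)$. Starting from $\det(A_{p-1}^{\#}) = 1/p$ and the standard identity $\det(L_{b,m}) = b^{m-1}(b-m)$ (which for $b=p/e$, $m=d$ gives $p^{d-1}/e^{d}$, since $b-m=1/e$), the scaling rules show that the exponent of $p$ in $\det(\fok,q)$ works out precisely to $\upsilon = rn - (p^{r-1}-1)/e - 1$; this also serves as an independent consistency check, since $\det(\fok,q)$ must equal $D_K$. Dividing the bound on $\max(\fok,q)$ by $\det(\fok,q)^{1/n} = p^{\upsilon/n}$ and rearranging by inserting a factor $p^{r+1}/p^{r+1}$ produces exactly the stated form $n\cdot p^{r-\upsilon/n}\cdot \tfrac{p^{r+1}+p^{r}+1-e^{2}}{12\,p^{r+1}}$.

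The main obstacle is the explicit computation of $\max(L_{p/e,d})$: for $b\ne m+1$ the lattice $L_{b,m}$ is not similar to a root lattice, so its covering radius is not a classical invariant and one must invoke (or reprove) the analysis of [BFN 05]. All remaining steps are algebraic manipulations; the clean final form, and in particular the appearance of the combination $p^{r+1}+p^{r}+1-e^{2}$, only emerges after combining the two maxima and clearing denominators.
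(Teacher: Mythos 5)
Your plan follows the same route as the paper: the paper proves this corollary (via [BFM 13], Corollary (6.7)) exactly by combining the orthogonal decomposition of Theorem~\ref{T:ortho-sum} with the upper bound on the Hermite--like thickness of the lattices $L_{b,m}$ from [BFN 05], (4.1), together with the additivity of $\max$ and multiplicativity of $\det$ under orthogonal sums. The only slight imprecision is calling the [BFN 05] result an ``explicit formula for the covering radius'' --- it is only an upper bound on $\max(L_{b,m})$ --- but since the target is an inequality this changes nothing.
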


This is proved in [BFM 13], Corollary  (6.7). The proof uses an upper bound for the Hermite--like thickness of the lattices $\Lambda_{b,m}$ proved in [BFN 05], (4.1).

\medskip
Using this corollary and (2.1), one proves  (3.2) and (3.3) as in [BFM 13], \S 7.

\section{Abelian fields of power of two conductor}
We keep the notation of \S 1 and 2. In particular, $K$ is a number field of degree $n = n_K$, and the absolute value of its disciriminant is
denoted by $D = D_K$.
Let $r \in \bn$, and let $\zeta$ be a primitive $2^r$-th root of unity. A field $K$
is said to have {\it conductor $2^r$} if $K$ is contained in the cyclotomic field
$\cyclo$, but not in $\bq(\zeta^2)$. Note that there is no field of conductor $2$ and the only field of conductor $4$ is $\bq(i)$. For $r \geq 3$, we have

\begin{proposition}\label{P:bounds}
Suppose that $K$ is an abelian field of conductor $2^r$, where $r \geq 3$. Then we have
$$
K = \cyclo, \mr, \textnormal{ or } \mi.
$$
Moreover,
\begin{enumerate}
 \item[(a)] If  $K = \cyclo$ or $\mr$, then
	    $$
	    M(K) \le 2^{-n} \sqrt {D_K}.
            $$
 \item[(b)] If $K = \mi$, then
	    $$
	    M(K) \le 2^{-n}(2n-1)^{\frac{n}{2}}.
	    $$
\end{enumerate}
\end{proposition}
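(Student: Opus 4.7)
The plan is threefold: classify the three possible $K$; handle~(a) (cyclotomic by direct computation, maximal totally real by reference to~\S5); then do the main new computation for~(b) by explicitly diagonalizing $(\fok,q)$. For the classification, write $G=(\bz/2^r\bz)^*=\langle-1\rangle\times\langle 5\rangle\cong\bz/2\times\bz/2^{r-2}$ and let $N=\ker\!\bigl(G\twoheadrightarrow(\bz/2^{r-1}\bz)^*\bigr)=\langle 1+2^{r-1}\rangle$. Since $N$ is the unique subgroup of order two in the cyclic factor $\langle 5\rangle$, and subgroups of the cyclic $2$-group $\langle 5\rangle$ are totally ordered by inclusion, any $H\subset G$ with $H\not\supset N$ must satisfy $H\cap\langle 5\rangle=\{1\}$; it therefore embeds into $G/\langle 5\rangle\cong\bz/2$, forcing $|H|\le 2$. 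A subfield $K\subset\cyclo[2^r]$ has conductor exactly~$2^r$ iff its fixing subgroup $H$ does not contain $N$, so the three options $H\in\{\{1\},\langle-1\rangle,\langle 2^{r-1}-1\rangle\}$ give respectively $\cyclo[2^r]$, $\mr[2^r]$, and $\mi[2^r]$ (for the last, check $\sigma_{2^{r-1}-1}(\zeta-\zeta^{-1})=\zeta-\zeta^{-1}$).

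For~(a) with $K=\cyclo[2^r]$, in the basis $\{1,\zeta,\ldots,\zeta^{n-1}\}$ of $\fok$ the identity $\trcq(\zeta^m)=2^{r-1}$ if $m\equiv0$, $-2^{r-1}$ if $m\equiv 2^{r-1}\pmod{2^r}$, and $0$ otherwise, gives $q$ the Gram matrix $2^{r-1}I_n$; thus $\tau(\fok,q)=n/4$ and~(2.1) yields $M(K)\le 2^{-n}\sqrt{D_K}$. The case $K=\mr[2^r]$ is covered by the general bound for maximal totally real subfields of cyclotomic fields discussed in~\S5.

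The main new step is~(b). For $K=\mi[2^r]$ with $n=2^{r-2}$, reading off the $+1$-eigenspace of $\sigma_{2^{r-1}-1}$ acting on $\bz[\zeta]$ I propose the candidate basis
\[
\mathcal{B}=\{1\}\cup\{\zeta^i-\zeta^{-i}:i\text{ odd},\,1\le i\le 2^{r-2}-1\}\cup\{\zeta^i+\zeta^{-i}:i\text{ even},\,2\le i\le 2^{r-2}-2\}.
\]
Using the same trace identity together with $q_K=\tfrac12\trcq(x\overline y)|_{K\times K}$ (since $[\cyclo[2^r]:K]=2$), all off-diagonal Gram entries vanish, leaving the Gram matrix $\mathrm{diag}(2^{r-2},2^{r-1},\ldots,2^{r-1})$; matching its determinant $2^{n(r-1)-1}$ with $D_K$ computed via the conductor--discriminant formula forces $\mathcal{B}$ to be a $\bz$-basis of $\fok$. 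Then $\max(\fok,q_K)=\tfrac14(2^{r-2}+(n-1)2^{r-1})=2^{r-4}(2n-1)$ and $\det^{1/n}=2^{r-1-1/n}$, so $\tau(\fok,q_K)=(2n-1)\,2^{1/n}/8$, and~(2.1) gives
\[
M(K)\le\left(\frac{(2n-1)\,2^{1/n}}{8n}\right)^{\!n/2}\!2^{(n(r-1)-1)/2}=2^{-n}(2n-1)^{n/2}.
\]
The substantive obstacle is identifying $\mathcal{B}$ and ruling out a hidden index of $\mathrm{span}_\bz\mathcal{B}$ inside $\fok$; the discriminant match closes this cleanly, after which the rest is bookkeeping with the trace identity.
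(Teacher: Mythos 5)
Your proposal is correct and takes essentially the same route as the paper: the same Galois-theoretic classification of the order-two subgroups, and for part (b) the identical orthogonal basis $1$ together with $\zeta^i+(-1)^i\zeta^{-i}$, the same Gram matrix $\mathrm{diag}(2^{r-2},2^{r-1},\dots,2^{r-1})$, the deep hole at the half-sum of the basis, and the same application of (2.1). The only departures are minor: you certify that your set is an integral basis by matching its Gram determinant against $D_K$ rather than invoking $\fok=\bz[\zeta-\zeta^{-1}]$, and you rederive the cyclotomic case of (a) by a direct trace computation instead of citing [BF 06], Proposition (10.1).
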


\begin{proof}
Let us first prove that $K = \cyclo$,  $\bq(\zeta + \zeta^{-1})$  or $\mi$.
Let $G = {\rm Gal}(\cyclo/\bq)$. Then $G = \left<\sigma,\tau\right>$ where $\tau$ is the complex conjugation, and
$\sigma (\zeta) = \zeta^3$. The subgroups of order $2$ of $G$ are
$$
H_1 = \left<\tau\right>, H_2 = \left<\sigma^{2^{r-3}}\right>, \textnormal{ and } H_3 = \left<\sigma^{2^{r-3}} \tau\right>.
$$
It is easy to check that  we have $\cyclo^{H_1} = \mr$, $\cyclo^{H_2} = \bq(\zeta^2)$ and $\cyclo^{H_3} = \mi$.
Note that any proper subfield of $\cyclo$ is a subfield of $\cyclo^{H_i}$ for $i = 1,2$ or $3$, and the statement easily follows  from this observation.

\medskip

Part (a) follows from  Proposition (10.1) in [BF 06] for $K = \cyclo$, and from Corollary (4.3) in [BFN 05] for $K = \bq(\zeta + \zeta^{-1})$.

\medskip

Let us prove (b). Suppose that $K = \mi$.  Recall that $\cyclo^{H_3} = \mi$, and that the ring of integers of $\cyclo$ is $\bz[\zeta]$.
Therefore  the ring of integers of  $\mi$ is $\bz[\zeta -\zeta^{-1}]$. Set $e_i = \zeta + (-1)^i \zeta^{-1}$ for all $i \in \bz$ and $n = 2^{r-2}$. Then
an easy computation shows that the elements $1, e_1, \dots, e_{n-1}$ form an integral basis of $\fok = \bz[\zeta -\zeta^{-1}]$,
and that for $-2n \leq i \leq 2n$ we have
\[
\trkq(e_i) =
\begin{cases}
  2n \quad &\textnormal{ if } i=0,\\
 -2n \quad &\textnormal{ if } i=\pm 2n,\\
 0 \quad &\textnormal{ otherwise. }
\end{cases}
\]

\medskip

Recall that $\tau : K \to K$ is the complex conjugation, and
let
$$
q : \fok \times \fok \to \bz
$$
given by
$$
q(x,y) = \trkq (x \tau(y))
$$
be the trace form. Then the Gram matrix of $(\fok,q)$ with respect to the basis $1,e_1,\dots, e_n$ is
$$
{\rm diag}(n,2n,\dots,2n).
$$
Note that this implies that we have
$$D_K = n^n 2^{n-1}.$$ Since $1,e_1,\dots,e_{n-1}$ is an orthogonal basis for $(\fok,q)$, it
follows from the Pythagorean theorem that the point $x = \frac{1}{2}(1 + e_1 + \dots + e_{n-1})$
is a deep hole of the lattice $(\fok,q)$. Thus we have
$$
\max(\fok,q) = \inf_{c \in \fok} q(x-c) = q(x) = \frac{n(2n-1)}{4}.
$$
Set
$$
\tau(\fok,q) = \frac{\max(\fok,q)}{\det(\fok,q)^{1/n}}.
$$
Then we have
$$
\tau(\fok,q) =  \frac{n (2n-1)}{4(n^n 2^{n-1})^{1/n}} = \sqrt[n]{2}  \cdot \left(\frac{2n - 1}{8}\right).
$$
By (2.1), we have
$$
M(K)  \le  \left(  \frac{\tau(\fok,q)}{n} \right)^{\frac{n}{2}} \sqrt  {D_K}.
$$
Thus we obtain
$$
M(K) \le 2^{-n}(2n-1)^{\frac{n}{2}}.
$$
This completes the proof of the proposition.
\end{proof}

Let $r \geq 3$ and let $K$ be an abelian field of conductor $2^r$ of the form $\mi$. The following two corollaries show an asymptotic behavior of the bound obtained in Proposition~(\ref{P:bounds})(b).

\begin{corollary}\label{C:bound-with-varepsilon}
We have
\[
 M(K) \leq 2^{-n}  (\sqrt{D_K})^{1+\varepsilon(n)},
\]
where
\[
 \varepsilon(n) \sim  \frac{\ln 2 - \frac{1}{2}}{n \ln n}.
\]
\end{corollary}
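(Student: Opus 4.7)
The plan is to start from the bound $M(K) \leq 2^{-n}(2n-1)^{n/2}$ established in Proposition~(\ref{P:bounds})(b) and the formula $D_K = n^n 2^{n-1}$ computed in its proof, and then simply solve for the exponent $\varepsilon(n)$ needed to rewrite the first bound in the form $2^{-n}(\sqrt{D_K})^{1+\varepsilon(n)}$. Concretely, the inequality we need is equivalent, after dividing by $2^{-n}$ and taking $n/2$-th roots, to
\[
2n-1 \;\leq\; \bigl(n^{n/2} 2^{(n-1)/2}\bigr)^{\frac{2(1+\varepsilon(n))}{n}} \;=\; \bigl(n \cdot 2^{(n-1)/n}\bigr)^{1+\varepsilon(n)}.
\]
Equivalently, after taking logarithms,
\[
\ln(2n-1) \;\leq\; (1+\varepsilon(n))\Bigl(\ln n + \tfrac{n-1}{n}\ln 2\Bigr).
\]
So the natural definition is to let $\varepsilon(n)$ be the smallest real number for which equality holds, i.e.
\[
\varepsilon(n) \;=\; \frac{\ln(2n-1) - \ln n - \frac{n-1}{n}\ln 2}{\ln n + \frac{n-1}{n}\ln 2}.
\]

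With $\varepsilon(n)$ defined this way, the stated inequality $M(K) \leq 2^{-n}(\sqrt{D_K})^{1+\varepsilon(n)}$ is immediate by construction. The remaining work is the asymptotic claim $\varepsilon(n) \sim \frac{\ln 2 - 1/2}{n \ln n}$, and this is a routine expansion. First I would rewrite the numerator as
\[
\ln(2n-1) - \ln(2n) + \ln 2 - \tfrac{n-1}{n}\ln 2 \;=\; \ln\!\Bigl(1-\tfrac{1}{2n}\Bigr) + \tfrac{\ln 2}{n},
\]
then use $\ln(1-\tfrac{1}{2n}) = -\tfrac{1}{2n} + O(n^{-2})$ to conclude that the numerator equals $\frac{\ln 2 - 1/2}{n} + O(n^{-2})$. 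The denominator is $\ln n + \ln 2 + O(n^{-1}) = \ln n \,(1+o(1))$, so dividing yields $\varepsilon(n) \sim \frac{\ln 2 - 1/2}{n \ln n}$.

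I do not expect any genuine obstacle here: the statement is essentially a repackaging of Proposition~(\ref{P:bounds})(b) in terms of $\sqrt{D_K}$, and the only nontrivial step is the Taylor expansion above, which is elementary. The one point that merits a sentence of care is noting that $\ln 2 - 1/2 > 0$, so $\varepsilon(n) > 0$ for all sufficiently large $n$ (in fact for all $n \geq 1$, since $\ln(2n-1) > \ln n + \frac{n-1}{n}\ln 2$ is equivalent to $2n-1 > n \cdot 2^{(n-1)/n}$, which holds for $n \geq 2$), ensuring that the bound is genuinely weaker than Minkowski's and thus consistent with the fact that $\mi$ is the one case in Proposition~(\ref{P:bounds}) where the sharper $2^{-n}\sqrt{D_K}$ bound is not claimed.
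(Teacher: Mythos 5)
Your proposal is correct and follows essentially the same route as the paper: the authors also define $\varepsilon(n)$ by forcing equality between $2^{-n}(2n-1)^{n/2}$ and $2^{-n}(\sqrt{D_K})^{1+\varepsilon(n)}$ (their expression $\varepsilon(n)=\frac{\ln a_n+\ln 2}{n\ln(2n)-\ln 2}$ with $a_n=(1-\tfrac{1}{2n})^n$ is identical to yours after dividing numerator and denominator by $n$), and then extract the asymptotics from $\ln(1-\tfrac{1}{2n})=-\tfrac{1}{2n}+O(n^{-2})$. Your closing remark that $\varepsilon(n)>0$ is a small extra check not present in the paper, and is accurate for the relevant degrees $n\geq 2$.
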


\begin{proof}
We set
\[
 \varepsilon(n) = \frac{\ln a_n + \ln 2}{n \ln (2n) - \ln 2},
\]
where
\[
a_n = \left(1 - \frac{1}{2n}\right)^n.
\]
Using the fact that $D_K = n^n 2^{n-1}$, the inequality of Proposition~(\ref{P:bounds})(b) can rewritten as
\[
  M(K) \leq 2^{-n}  (\sqrt{D_K})^{1+\varepsilon(n)}.
\]
A simple calculation shows that
\[
 \lim_{n \to \infty} (n \ln n) \cdot \varepsilon(n) = \ln 2 - \frac{1}{2}.
\]
The result follows.
\end{proof}

\begin{corollary} We have
\[
 M(K) \leq (\sqrt{2} \, e^{-1/4}) \cdot 2^{-n}  \sqrt{D_K}.
\]
\end{corollary}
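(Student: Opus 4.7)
The plan is to derive the bound directly from Proposition~(\ref{P:bounds})(b) by an elementary algebraic manipulation, using only the classical inequality $1-x\le e^{-x}$. No new lattice computation is required: the work has already been done in the preceding proposition, where both $M(K)\le 2^{-n}(2n-1)^{n/2}$ and the discriminant formula $D_K=n^n 2^{n-1}$ are established.

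First, I would rewrite the quantity $(2n-1)^{n/2}$ in a form that isolates the factor $\sqrt{D_K}$. Observing that
\[
(2n-1)^{n/2} \;=\; (2n)^{n/2}\left(1-\frac{1}{2n}\right)^{n/2} \;=\; 2^{n/2}\,n^{n/2}\left(1-\frac{1}{2n}\right)^{n/2},
\]
and that $\sqrt{D_K}=n^{n/2}\,2^{(n-1)/2}$, I can factor
\[
(2n-1)^{n/2} \;=\; \sqrt{2}\,\sqrt{D_K}\,\left(1-\frac{1}{2n}\right)^{n/2}.
\]
Substituting this into the bound of Proposition~(\ref{P:bounds})(b) gives
\[
M(K) \;\le\; \sqrt{2}\cdot 2^{-n}\,\sqrt{D_K}\,\left(1-\frac{1}{2n}\right)^{n/2}.
\]

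The final step is to estimate the factor $(1-1/(2n))^{n/2}$, which, in the notation of Corollary~(\ref{C:bound-with-varepsilon}), is exactly $a_n^{1/2}$. Applying the standard inequality $1-x\le e^{-x}$ with $x=1/(2n)$, I get $(1-1/(2n))^n\le e^{-1/2}$, hence
\[
\left(1-\frac{1}{2n}\right)^{n/2} \;\le\; e^{-1/4},
\]
and the stated inequality
\[
M(K) \;\le\; (\sqrt{2}\,e^{-1/4})\cdot 2^{-n}\,\sqrt{D_K}
\]
follows. There is essentially no obstacle here: the only potentially delicate point is to check the algebraic rearrangement that brings $\sqrt{D_K}$ out of $(2n-1)^{n/2}$, after which the estimate is immediate from a one-line exponential inequality valid for every $n\ge 1$.
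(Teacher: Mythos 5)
Your proposal is correct and follows essentially the same route as the paper: both rewrite the bound of Proposition~(\ref{P:bounds})(b) as $M(K)\le \sqrt{2a_n}\cdot 2^{-n}\sqrt{D_K}$ with $a_n=\left(1-\frac{1}{2n}\right)^n$ and then bound $a_n\le e^{-1/2}$. The only (cosmetic) difference is that you justify $a_n\le e^{-1/2}$ by the pointwise inequality $1-x\le e^{-x}$, whereas the paper invokes the monotonicity of the sequence $(a_n)$ together with its limit.
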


\begin{proof}
Using the same notation as in the proof of Corollary~(\ref{C:bound-with-varepsilon}), we can rewrite the inequality of Proposition~(\ref{P:bounds})(b) as
\[
 M(K) \leq (\sqrt{2 a_n}) \cdot 2^{-n}  \sqrt{D_K}.
\]
The result follows from the fact that the sequence $(a_n)$ is increasing and its limit equals to $e^{-1/2}$.
\end{proof}

\section{Cyclotomic fields and their maximal totally real subfields}
Let $m \in \bn$, and let $\zeta$ be a primitive $m$-th root of unity. Let $K = \cyclo$, and let $F = \mr$ be its maximal totally
real subfield.
Let us denote by $n_K$, respectively $n_F$, their degrees, and by $D_K$, respectively $D_F$, the absolute values of their discriminants.
The aim of this section is to survey some results concerning $M(K$) and $M(F)$.

\begin{theorem} We have
\[
M(K) \leq 2^{-n_K} \sqrt{D_K}
\]
\end{theorem}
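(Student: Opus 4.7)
The plan is to apply inequality~(2.1): it suffices to prove that $\tau(\fok,q) \leq n_K/4$, where $q(x,y) = \trkq(x\overline{y})$, since then~(2.1) immediately gives
\[
M(K) \;\leq\; \left(\frac{n_K/4}{n_K}\right)^{n_K/2}\sqrt{D_K} \;=\; 2^{-n_K}\sqrt{D_K}.
\]

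My first step is to reduce to prime power conductor via the Chinese Remainder Theorem. Writing $m = m_1 \cdots m_s$ as a product of pairwise coprime prime powers and setting $K_i = \cyclo[m_i]$, there is an isomorphism of rings $\fok \cong \fo_{K_1} \otimes_\bz \cdots \otimes_\bz \fo_{K_s}$ under which the trace form $q$ decomposes as the tensor product of the trace forms $q_i$ on $\fo_{K_i}$; this uses that complex conjugation is compatible with the tensor decomposition (it acts diagonally). Thus the problem reduces to controlling $\tau$ of a tensor product of prime power cyclotomic trace lattices.

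The second step is to bound each prime power factor. If $K_i$ has conductor a power of an odd prime $p$, then $e = [\cyclo[p^r]:K_i] = 1$ and Corollary~\ref{c:tau-min} simplifies to
\[
\tau(\fo_{K_i}, q_i) \;\leq\; \frac{n_i(p+1)}{12\, p^{(p-2)/(p-1)}},
\]
and an elementary estimate shows this is bounded by $n_i/4$ for every odd prime~$p$. If $K_i$ has conductor a power of $2$, then Proposition~\ref{P:bounds}(a) combined with~(2.1) yields $\tau(\fo_{K_i}, q_i) \leq n_i/4$ (the cases $r=1,2$ being trivial since $K_i = \bq$ or $\bq(i)$).

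The main obstacle is the tensor product step: $\tau$ is not multiplicative under tensor products of lattices in general (although the determinant is). To handle this, I would exploit the refined orthogonal decomposition from Theorem~\ref{T:ortho-sum}: each $(\fo_{K_i}, q_i)$ is isometric to an orthogonal sum of copies of a scaled dual root lattice $A_{p_i-1}^{\#}$ together with a single copy of $L_{p_i, p_i-1}$ (with the analogous statement at $p=2$). The tensor product of such lattices admits an explicit geometric description through the Minkowski embedding $K \otimes_\bq \br \cong \bc^{n_K/2}$, on which $q$ becomes twice the standard Hermitian form; combined with the classical covering radius estimates for dual root lattices recorded in [BFN~05], this should yield the required bound $\tau(\fok,q) \leq n_K/4$ and complete the proof.
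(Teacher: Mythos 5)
The paper does not actually prove this theorem: its ``proof'' is a one-line citation of [BF~06], Proposition~(10.1). You are therefore attempting something more ambitious, and your outline does follow the strategy of that reference (tensor decomposition of $\bz[\zeta_m]$ over the prime power factors of $m$, plus inequality~(2.1)). The prime-power ingredients you assemble are essentially fine: for odd $p$ the specialization of Corollary~\ref{c:tau-min} does give $\tau(\fo_{K_i},q_i)\le n_i/4$, and for conductor $2^r$ the trace lattice of $\cyclo[2^r]$ is a scaled cubic lattice with $\tau$ exactly $n_i/4$ (though note that deducing the $\tau$-bound from Proposition~\ref{P:bounds}(a) via~(2.1) is backwards --- (2.1) only runs from $\tau$ to $M(K)$, not the other way; you need the direct Gram matrix computation).

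The genuine gap is exactly the step you flag and then defer: passing from $\tau(\fo_{K_i},q_i)\le n_i/4$ for each factor to $\tau(\fok,q)\le n_K/4$ for the tensor product. The normalized thickness is \emph{not} submultiplicative: the determinant behaves well ($\det(M\otimes N)^{1/mm'}=\det(M)^{1/m}\det(N)^{1/m'}$), but the covering radius does not. Already for cubic lattices $\max(\bz^m\otimes\bz^{m'})=mm'/4$ while $\max(\bz^m)\max(\bz^{m'})=mm'/16$, so no inequality of the form $\max(M\otimes N)\le\max(M)\max(N)$ is available, and that is precisely what your reduction would require. The known general estimate is of the shape $\max(M\otimes N)\le\max(M)\cdot\sum_i q_N(e_i^*,e_i^*)$ for a well-chosen basis $(e_i)$ of $N$ with Gram--Schmidt vectors $e_i^*$; making this yield $n_K/4$ requires exhibiting explicit good bases of the lattices $(\bz[\zeta_{p^r}],q)$ and is the real content of [BF~06], \S10. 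Your appeal to Theorem~\ref{T:ortho-sum} does not resolve this, since after distributing the tensor product over the orthogonal summands you are left with bounding covering radii of lattices such as $A_{p-1}^{\#}\otimes A_{p'-1}^{\#}$, for which [BFN~05] records no estimates; and the remark that $q$ extends to twice the standard Hermitian form on $K\otimes_\bq\br\cong\bc^{n_K/2}$ says nothing about the position of the lattice $\fok$ inside that space. As written, the phrase ``this should yield the required bound'' is standing in for the entire argument.
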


\begin{proof} This is proved in [BF 06], Proposition (10.1).

\end{proof}

For certain values of $m$, one obtains better bounds :

\begin{theorem} We have

{\rm (i)} Suppose that $m$ is of the form $m = 2^r3^s5^t$, with $r \ge 0$,
$s \ge 1$ and $t \ge 1$; $m = 2^r 5^s$ with $r \ge 2$, $s \ge 1$;
$m = 2^r 3^s$ with $r \ge 3$, $s \ge 1$. Then
$$M(K) \le 8^{-n_K / 2} \sqrt{D_K}.$$

{\rm (ii)} Suppose that $m$ is of the form $m = 2^r 5^s 7^t$, with
$r \ge 0$, $s \ge 1, t \ge 1$; $m = 2^r 3^s 5^t$ with
$r \ge 0, s \ge 2, t \ge 1$; $m = 2^r 3^s 7^t$ with $r \ge 2, s \ge 1,
t \ge 1$. Then $$M(K) \le 12^{-n_K / 2} \sqrt{D_K}.$$

\end{theorem}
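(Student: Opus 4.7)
The natural strategy is to apply the general estimate \eqref{E:gen-est}, which reduces each of (i) and (ii) to producing $\alpha \in P$ with $\tau(\mathcal{O}_K, q_\alpha)/n_K \le 1/8$ in case (i) and $\le 1/12$ in case (ii). I would start with the standard choice $\alpha = 1$, i.e., the usual trace form $q(x,y) = \trkq(x\bar y)$.

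The main structural tool is the tensor product decomposition of cyclotomic integer rings: whenever $\gcd(m_1, m_2) = 1$, we have $\mathcal{O}_{\cyclo[m_1 m_2]} = \mathcal{O}_{\cyclo[m_1]} \otimes_{\bz} \mathcal{O}_{\cyclo[m_2]}$, and because the trace factorises through the two subfields, the trace form on $\mathcal{O}_{\cyclo[m_1 m_2]}$ is the tensor product of the trace forms on the factors. Iterating along the prime-power factorisation $m = 2^{r_0} \prod_i p_i^{r_i}$ writes $(\mathcal{O}_K, q)$ as a tensor product of the prime-power trace form lattices $(\mathcal{O}_{\cyclo[p_i^{r_i}]}, q_{p_i^{r_i}})$, to which the analysis of Section 3 applies directly.

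Next, I would invoke a multiplicative inequality for the Hermite-like thickness under such tensor products, of the form
\[
\frac{\tau(L_1 \otimes L_2)}{\dim(L_1)\dim(L_2)} \le \frac{\tau(L_1)}{\dim(L_1)} \cdot \frac{\tau(L_2)}{\dim(L_2)},
\]
valid for the lattices occurring here; this (or a variant tailored to the orthogonal summands $p^{r-1}A_{p-1}^{\#}$ and $L_{b,m}$ isolated in Theorem~\ref{T:ortho-sum}) is the device used in [BFN 05]. Combined with Corollary~\ref{c:tau-min} applied to $K = \cyclo[p_i^{r_i}]$ (so that $e=1$), the desired inequality $\tau/n \le 1/8$ or $\le 1/12$ becomes a numerical inequality among explicit one-prime-power contributions. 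The arithmetic thresholds on $r, s, t$ in the statement record exactly when the $2$-power contribution has shrunk enough for the product to clear the target once the odd-prime contributions (a $3^s$-factor with $s \ge 1$, a $5^t$-factor with $t \ge 1$, a $7^t$-factor with $t \ge 1$) are accounted for; this explains why $m = 2^r 3^s$ requires the larger lower bound $r \ge 3$, whereas $m = 2^r 3^s 5^t$ already works for $r \ge 0$.

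I expect the main obstacle to be justifying the tensor product behaviour of $\tau/n$: the naive inequality $\max(L_1 \otimes L_2) \le \max(L_1)\max(L_2)$ already fails for $\bz \otimes \bz$, so one must exploit the highly symmetric orthogonal decomposition of each cyclotomic factor from Theorem~\ref{T:ortho-sum} in order to control the covering radius of the tensor product, following [BFN 05]. Once this multiplicativity is in hand, the remainder reduces to a short numerical case check over the triples $(r_0, s, t)$ admitted by (i) and (ii).
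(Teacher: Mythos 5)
There is a genuine gap, and the route you chose is not the one the paper takes. Your argument hinges on a multiplicativity statement for $\tau(L)/\dim(L)$ under tensor products, which you yourself observe is false in general (covering radii do not behave multiplicatively: already $\max(\bz\otimes\bz)=1/4$ while the proposed bound would give $1/16$). You defer the repair to ``exploiting the symmetric orthogonal decomposition, following [BFN 05]'', but no such repair is supplied, and none is available in this generality: this is precisely the hard point, not a technicality. Worse, the numerology of your plan cannot reproduce the hypotheses of the theorem. With $\alpha=1$ the $2$-power factor $(\bz[\zeta_{2^r}],\trkq(x\bar y))$ is a scaled cubic lattice $\sqrt{2^{r-1}}\,\bz^{2^{r-1}}$, whose ratio $\tau/\dim$ equals $1/4$ \emph{independently of $r$}; so a product formula of the shape you propose could never distinguish $r\ge 2$ from $r\ge 3$ in the cases $m=2^r5^s$ and $m=2^r3^s$, whereas the theorem's thresholds genuinely matter. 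Finally, the choice $\alpha=1$ is already wrong at the base level: $(\bz[\zeta_{15}],\trkq(x\bar y))$ has determinant $D_K=3^45^6$, which is not an eighth power of a rational, so this lattice is not similar to $E_8$; a nontrivial twist $q_\alpha$ (with $\alpha$ related to the inverse different) is indispensable.

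The paper's proof is a direct application of the inequality (2.1) together with Proposition (10.2) of [BF 06]: for each $m$ satisfying the stated divisibility conditions one constructs a totally positive $\alpha$ such that $(\fok,q_\alpha)$ is an orthogonal sum of copies of $E_8$ in case (i), respectively of the Leech lattice in case (ii). Since $\tau(E_8)/8=1/8$ and $\tau(\Lambda_{24})/24=2/24=1/12$, and $\tau/n$ is unchanged under orthogonal sums of copies, (2.1) immediately yields $M(K)\le 8^{-n_K/2}\sqrt{D_K}$, respectively $12^{-n_K/2}\sqrt{D_K}$. The lists of admissible $m$ are exactly the conditions guaranteeing that $K=\cyclo[m]$ contains one of the degree-$8$ cyclotomic fields $\cyclo[15]$, $\cyclo[20]$, $\cyclo[24]$ over which $E_8$ is an ideal lattice (case (i)), or one of the degree-$24$ fields $\cyclo[35]$, $\cyclo[45]$, $\cyclo[84]$ carrying the Leech lattice (case (ii)); the orthogonal-sum structure then comes from going up from the subfield to $K$. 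If you want to salvage your write-up, replace the tensor-product multiplicativity by this descent to a distinguished subfield and the explicit ideal-lattice constructions of $E_8$ and the Leech lattice.
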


\begin{proof} This is proved in [BF 06], Proposition (10.2). The result follows from (2.1), and the fact that an orthogonal sum of lattices of type $E_8$ is defined over $K$
in the sense of \S 2 in case {\rm (i)}, and an orthogonal sum of lattices isomorphic to the Leech lattice in case {\rm (ii)}.

\end{proof}

The results are less complete for the maximal totally real subfields. We have

\begin{theorem} Suppose that $m = p^r$ where $p$ is a prime and $r \in \bn$, or that $m = 4 k$ with $k \in \bn$ odd. Then we have
\[
M(F) \leq 2^{-n_F} \sqrt{D_F}
\]
\end{theorem}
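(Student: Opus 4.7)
The plan is to prove the theorem by reducing each case of the hypothesis on $m$ to a result already established in the paper or in the cited literature. Suppose first that $m = p^r$ is a prime power. If $p$ is odd and $r \geq 2$, then $F$ is a totally real element of $\mathcal{A}$ with $r_F \geq 2$, and Corollary~\ref{T:Minkowski} immediately gives $M(F) \leq 2^{-n_F}\sqrt{D_F}$. If $p = 2$ and $r \geq 3$, Proposition~\ref{P:bounds}(a) gives the bound, while $m \in \{1,2,4\}$ yield the trivial cases $F = \bq$. For $m = p$ an odd prime, $F = \bq(\zeta_p+\zeta_p^{-1})$ has conductor $p$: Theorem~\ref{T:ortho-sum} applied with $r=1$ and $e=2$ identifies $(\fok,q)$ (up to scaling) with the lattice $L_{p/2,(p-1)/2}$, and the Hermite-like thickness bound for the family $L_{b,m}$ obtained in [BFN 05], combined with \eqref{E:gen-est}, yields $\tau_{\min}(\fok) \leq n_F/4$ and hence the Minkowski bound.

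The interesting case is $m = 4k$ with $k$ odd. If $k = 1$ then $F = \bq$, so assume $k > 1$. Since $\gcd(4,k)=1$, one may take $\zeta_m = i\zeta_k$, giving
\[
\zeta_m + \zeta_m^{-1} = i(\zeta_k - \zeta_k^{-1}).
\]
The field $F$ then decomposes as a $\bq$-vector space into
\[
F = \bq(\zeta_k + \zeta_k^{-1}) \,\oplus\, i\,\bq(\zeta_k)^-,
\]
where $\bq(\zeta_k)^-$ denotes the $(-1)$-eigenspace of complex conjugation on $\bq(\zeta_k)$. Because $\tr_{\bq(\zeta_k,i)/\bq(\zeta_k)}(i\beta) = 0$ for any $\beta \in \bq(\zeta_k)$, the trace form $q(x,y) = \tr_{F/\bq}(xy)$ is orthogonal with respect to this decomposition. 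This induces an orthogonal lattice decomposition $(\fok,q) = L_+ \perp L_-$, the first summand being a scaling of the trace-form lattice of $\bq(\zeta_k+\zeta_k^{-1})$, and the second being a twisted trace-form lattice weighted by the totally positive element $-(\zeta_k-\zeta_k^{-1})^2 \in \bq(\zeta_k+\zeta_k^{-1})$. Bounding the Hermite-like thickness of each summand via Corollary~\ref{c:tau-min} applied to each prime-power component of $k$ (using a CRT-type tensor decomposition of $\mathcal{O}_{\bq(\zeta_k)}$ when $k$ has multiple prime factors), one obtains $\tau_{\min}(\fok) \leq n_F/4$, and \eqref{E:gen-est} completes the proof.

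The main obstacle lies in the $m = 4k$ case when $k$ has multiple prime factors. One must identify $\fok$ precisely as a $\bz$-lattice (it need not coincide with $\mathcal{O}_{\bq(\zeta_k+\zeta_k^{-1})}[i(\zeta_k-\zeta_k^{-1})]$), ensure the $\pm$-splitting holds at the integral level rather than merely rationally, and propagate the thickness estimates across the CRT tensor decomposition together with the extra twist by $-(\zeta_k-\zeta_k^{-1})^2$ on the minus summand. The hypothesis that $k$ is odd is essential here, since it makes $\bq(\zeta_4)$ and $\bq(\zeta_k)$ linearly disjoint with coprime discriminants, so that the tensor-product structure of the rings of integers is clean; for $m$ with more subtle factorizations this strategy would acquire denominators and the orthogonal decomposition would fail over $\bz$.
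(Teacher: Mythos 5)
The paper does not reprove this theorem at all: its ``proof'' is a citation to [BF 06], Proposition (8.4) for $m=p^r$ with $p$ odd, to [BFN 05], Corollary (4.3) for $m=2^r$, and to [BFS 06], Proposition (4.5) for $m=4k$. Your reductions for the prime-power cases are sound and pleasantly internal to the paper: Corollary~\ref{T:Minkowski} does cover $F=\mr[p^r]$ for $p$ odd and $r\ge 2$ (this field lies in $\mathcal{A}$ with $r_F\ge 2$), Proposition~\ref{P:bounds}(a) covers $2^r$ with $r\ge 3$, and for conductor $p$ the identification of $(\fok,q)$ with a rescaled $L_{p/2,(p-1)/2}$ is exactly Theorem~\ref{T:ortho-sum} with $e=2$, $r=1$. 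You should, however, actually verify the inequality $\tau(\fok,q)\le n_F/4$ in that last case: Corollary~\ref{c:tau-min} with $r=1$, $e=2$ reduces it to $p^{2/(p-1)}(p^2+p-3)\le 3p^2$, which does hold for all odd primes $p$ (with equality at $p=3$), but you assert the conclusion without this check.

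The genuine gap is the case $m=4k$ with $k>1$ odd, which is the only case not already covered by results stated elsewhere in the paper. What you give there is a plan, not a proof, and you say so yourself. The rational orthogonal decomposition $F=\bq(\zeta_k+\zeta_k^{-1})\oplus i\,\bq(\zeta_k)^-$ is correct (the trace of $i\beta$ does vanish), but you do not show that $\fok$ is the orthogonal sum of the two \emph{integral} pieces, you do not compute or bound the Hermite-like thickness of the minus summand twisted by $-(\zeta_k-\zeta_k^{-1})^2$, and you do not carry out the propagation of estimates across the tensor decomposition for composite $k$. Each of these is a substantive step: nothing in Corollary~\ref{c:tau-min} applies to twisted trace forms, and choosing a good totally positive $\alpha$ in the definition of $\tau_{\min}$ is exactly where the work lies. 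This missing content is precisely [BFS 06], Proposition (4.5), which the paper cites rather than reproves; as written, your argument does not establish the $m=4k$ case.
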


\begin{proof} This is proved in [BF 06], Proposition (8.4) for $m = p^r$ and $p$ and odd prime; in [BFN 05], Corollary (4.3) for $m = 2^r$; and in [BFS 06], Proposition (4.5) for $m = 4k$.

\end{proof}

\bigskip
{\bf Bibliography}

\medskip

[BF 99] E. Bayer--Fluckiger,  {\it Lattices and number Fields}, Contemp. Math., {\bf 241} (1999),
69--84.

[BF 02] E. Bayer-Fluckiger, {\it Ideal Lattices}, proceedings of the conference Number Theory
and Diophantine Geometry, (Zurich, 1999),  Cambridge Univ. Press
(2002), 168--184.

[BF 06] E.  Bayer-Fluckiger, {\it Upper bounds for Euclidean minima of algebraic number fields},
J. Number Theory {\bf 121} (2006), no. 2, 305-323.

[BFM 13] E. Bayer--Fluckiger, P. Maciak, {\it Upper bounds for the Euclidean minima of
abelian fields of odd prime power conductor}, Math. Ann. (to appear).

[BFN 05] E.  Bayer-Fluckiger, G.  Nebe, {\it On the euclidean minimum of some real number fields},
J. th. nombres Bordeaux, {\bf  17}  (2005), 437-454.

[BFS 06], E. Bayer-Fluckiger, I. Suarez, {\it Ideal lattices over totally real number fields and Euclidean minima},
 Archiv Math.  {\bf86} (2006), 217-225.

[C 07] J.-P. Cerri, {\it Euclidean minima of totally real number fields. Algorithmic determination}, Math. Comp. {\bf 76} (2007), 1547-1575.

[CS 99]  J. H. Conway, N. J. A. Sloane, {\it Sphere Packings, Lattices and Groups}, Third Edition., Springer-Verlag  New York, Inc. (1999).

[HGRS 09]  R. J. Hans-Gill, M. Raka and R. Sehmi, \textit{On Conjectures of Minkowski and Woods for $n = 7$},
J. Number Theory {\bf 129}   (2009), 1011-1033.

[HGRS 11] R. J. Hans-Gill, M. Raka and R. Sehmi, \textit{On Conjectures of Minkowski and Woods for $n = 8$}, Acta Arith. {\bf 147}  (2011), 337-385.

[Le 95]  F. Lemmermeyer, \textit{The Euclidean algorithm in algebraic number fields}, Expo. Math. {\bf 13} (1995), 385-416.

[M 96] J. Martinet, {\it R\'eseaux parfaits des espaces euclidiens}, Masson (1996),
English translation: {\it Perfect lattices of Euclidean spaces}, Springer-Verlag, Grundlehren
der math. Wiss, {\bf 327} (2003).

[Mc 05] C.T. McMullen, \textit{Minkowski's conjecture, well-rounded lattices and topological dimension}, J. Amer. Math. Soc. {\bf 18} (2005) 711-734.

\end{document}